\title{Murphy's law for toric vector bundles on smooth projective toric varieties}
\author{Bernt Ivar Utst$\o$l N$\o$dland}
\begin{document} 

\maketitle
\begin{abstract}
We show that the moduli space of rank three toric vector bundles on smooth projective toric varieties satisfies Murphy's law, that is, every singularity type of finite type arises somewhere on this space.
\end{abstract}

\section{Introduction}
We say that a moduli space satisfies Murphy's law if any singularity type defined over $\Spec \Z$ arises somewhere on the space. This notion was introduced by Vakil \cite{Vakil} who proceeded to show that Murphy's law holds for many moduli spaces. In \cite{PayneModuli} Payne constructed the moduli space of toric vector bundles of fixed rank and equivariant Chern class. He then proceeded to prove that the moduli space of rank three toric vector bundles on quasi-affine toric varieties satisfies Murphy's law. In other words any singularity type arises on some moduli space of rank three toric vector bundles on some quasi-affine toric variety.

In this paper we use similar techniques to prove that the moduli space of rank three toric vector bundles on smooth projective toric varieties also satisfies Murphy's law, thus answering a question of Payne. In contrast, the moduli space of rank two toric vector bundles is a smooth quasi-projective variety \cite{PayneModuli}.

\section{Preliminaries on toric varieties} \label{section:prelimToric}

All of the following preliminary material can be found in any introductory text on toric geometry, for instance \cite{CLS} and \cite{Fulton}. Let $k$ be a field and let $T= (k^\ast)^n$ be an algebraic torus and denote by $M$ its character lattice $\Hom(T,k^\ast)$ and by $N$ its dual lattice of one-parameter subgroups. Both $M$ and $N$ are as groups isomorphic to $\Z^n$ and there is a pairing $M \times N \to \Z$ which, after fixing an isomorphism $M \simeq \Z^n$, is simply the ordinary scalar product. A toric variety  $X$ will in this paper denote a normal irreducible variety containing $T$ as an open dense subset, such that the action of $T$ on itself extends to an action on $X$. It is well-known that toric varieties are classified via combinatorial data.

By a cone $\sigma$ we will mean a strongly convex, rational polyhedral cone $\sigma \subset N_\Q = N \otimes \Q$. There is a bijection between cones $\sigma$, up to $\GL(n,\Z)$, and affine toric varieties obtained by taking the spectrum $U_\sigma$ of the semigroup algebra $ k[\sigma^\vee \cap M]$, where we by $\sigma^\vee$ mean the dual cone of $\sigma$, inside $M \otimes \Q$.

A general toric variety is glued from affine pieces corresponding to cones, which is made precise by the following definition.

\begin{definition}
By a fan we will mean a collection $\Sigma$ of finitely many cones $\sigma$, which is closed under intersections and taking faces.
\end{definition}

There is a bijection between fans, again up to $\GL(n,\Z)$, and toric varieties $X_\Sigma$ obtained by taking the disjoint union of $U_\sigma$ and gluing $U_\sigma$ to $U_\tau$ along the intersection $U_{\sigma \cap \tau}$.

Fundamental to the theory of toric varieties is the following result, describing  orbits (and orbit closures) of the $T$-action on $X_\Sigma$.

\begin{theorem}
There is a bijection between cones $\sigma \subset \Sigma$ and $T$-orbit (closures) in $X_\Sigma$. The correspondence sends a cone of dimension $k$ to a $T$-orbit (closure) of dimension $n-k$. 
\end{theorem}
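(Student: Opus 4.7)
The plan is to construct, for each cone $\sigma \in \Sigma$, a distinguished $T$-fixed-modulo-subtorus point whose orbit will be the one corresponding to $\sigma$, and then to verify the bijection first locally on each affine chart $U_\sigma$ and then globalize by the gluing in the definition of $X_\Sigma$.

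First I would introduce, for each $\sigma \in \Sigma$, the distinguished point $x_\sigma \in U_\sigma = \Spec k[\sigma^\vee \cap M]$ corresponding to the semigroup homomorphism $\sigma^\vee \cap M \to k$ sending $m$ to $1$ if $m \in \sigma^\perp$ and to $0$ otherwise. Multiplicativity relies on the fact that $\sigma^\perp \cap M$ is a face of the semigroup $\sigma^\vee \cap M$. A direct computation with the $T$-action $(t\cdot \phi)(m) = \chi^m(t)\phi(m)$ shows that the stabilizer of $x_\sigma$ is the subtorus of $T$ whose lattice of one-parameter subgroups is $N \cap \operatorname{span}_\Q(\sigma)$, so the orbit $O_\sigma := T\cdot x_\sigma$ has dimension $n - \dim \sigma$.

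Next I would classify all $T$-orbits in a single $U_\sigma$. Given any $k$-point $\phi \colon \sigma^\vee \cap M \to k$, the subset $\{m \in \sigma^\vee \cap M : \phi(m) \neq 0\}$ is a face of the semigroup, hence of the form $\tau^\perp \cap \sigma^\vee \cap M$ for a unique face $\tau \preceq \sigma$. Choosing a torus element that rescales the characters in that face to $1$ then shows $T \cdot \phi = O_\tau$, so the $T$-orbits of $U_\sigma$ are precisely $\{O_\tau : \tau \preceq \sigma\}$. To globalize, I would invoke the fact that $U_\tau \subseteq U_\sigma$ whenever $\tau \preceq \sigma$ together with $X_\Sigma = \bigcup_{\sigma \in \Sigma} U_\sigma$, so that the local classifications patch to a bijection between all cones of $\Sigma$ and all $T$-orbits of $X_\Sigma$.

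For the orbit-closure statement I would show $\overline{O_\sigma} = \bigsqcup_{\tau \succeq \sigma} O_\tau$, for example by taking one-parameter subgroup limits $\lim_{s\to 0} \lambda^v(s) \cdot x_\tau$ with $v$ in the relative interior of $\sigma$ and $\tau \succeq \sigma$, which produce $x_\sigma$ in the closure, combined with an upper-semicontinuity argument for the vanishing locus of the semigroup homomorphism. Since $\dim \overline{O_\sigma} = \dim O_\sigma = n - \dim \sigma$, this gives the dimension count in the statement.

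The main obstacle I anticipate is the orbit classification on $U_\sigma$: one must identify faces of the semigroup $\sigma^\vee \cap M$ with faces of the cone $\sigma$, then explicitly exhibit a torus element carrying an arbitrary $\phi$ to the distinguished point $x_\tau$. Once this combinatorial-to-geometric translation is in hand, the rest of the argument is formal.
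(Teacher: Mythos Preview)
The paper does not give its own proof of this theorem: it is stated in the preliminary Section~\ref{section:prelimToric} as standard toric-geometry background, with the blanket reference ``All of the following preliminary material can be found in any introductory text on toric geometry, for instance \cite{CLS} and \cite{Fulton}.'' Your outline is precisely the standard textbook argument (distinguished points $x_\sigma$, orbit classification on each $U_\sigma$ via faces of $\sigma^\vee \cap M$, gluing, then the closure relation), so there is nothing to compare against.

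One small slip to fix: in your closure paragraph the roles are reversed. To show $O_\tau \subset \overline{O_\sigma}$ for $\sigma \preceq \tau$ you want to exhibit $x_\tau$ as a limit of points in $O_\sigma$, i.e.\ take $v$ in the relative interior of $\tau$ and compute $\lim_{s\to 0}\lambda^v(s)\cdot x_\sigma = x_\tau$, not the limit of $\lambda^v(s)\cdot x_\tau$ with $v \in \operatorname{relint}(\sigma)$ (for such $v$ the one-parameter subgroup lies in the stabilizer of $x_\tau$, so that limit is just $x_\tau$ again and tells you nothing). With that correction your sketch is the standard proof.
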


In particular the cones of dimension $n$ corresponds to $T$-fixed points, while the cones of dimension $1$, called rays, corresponds to $T$-invariant Weil divisors. We often by abuse of notation identify a ray $\rho$ with the unique minimal lattice point on it. We denote by $\Sigma(l)$ the set of $l$-dimensional cones of $\Sigma$ and $\sigma(l)$ the set of $l$-dimensional faces of the cone $\sigma$.

There is an exact sequence describing the divisor class group of a toric variety,
\[ M \to  \Z^{\Sigma(1)} \to \Cl(X_\Sigma) \to 0, \]
which is left exact if the linear span of the rays of $\Sigma$ is $n$-dimensional, for instance if $X_\Sigma$ is projective \cite[Theorem 4.1.3]{CLS}. In particular this says that any divisor is linearly equivalent to a $T$-invariant divisor, in other words to a linear combination of divisors $D_\rho$, where $\rho \in \Sigma(1)$.

On a smooth variety the class group and the Picard group are the same, however if the variety is singular they may be different. The $T$-invariant Cartier divisors is the subgroup $\CDiv_T$ of  $ \Z^{\Sigma(1)} $ consisting of divisors $D = \sum a_\rho D_\rho$ such that for any cone $\sigma \in \Sigma$ there exists a character $m_\sigma \in M$ with $\langle m_\sigma,\rho \rangle = a_\rho$ for any $\rho \in \sigma(1)$. This last technical condition is simply saying that $D$ has to be trivial on $U_\sigma$ and that it has to be the divisor of an invariant rational function on $T$, namely the one corresponding to the character $m_\sigma$.

For a $T$-invariant Cartier divisor $D$ we can also associate a piecewise linear support function 
\[ \phi_D: N \otimes \Q \to \Q \]
which maps any $x \in N \otimes \Q$ to $\langle m_\sigma,x \rangle \in \Q$ for any $\sigma$ containing $x$. This support function uniquely determines $D$.

\section{Toric vector bundles}  \label{section:prelimTVB}

A toric vector bundle $\E$ is a vector bundle on a toric variety $X_\Sigma$ together with a $T$-action on the total space of the vector bundle, making the bundle projection $\E \to X$ into a $T$-equivariant morphism, such that for any $t \in T, x \in X_\Sigma$ the map $\E_x \to \E_{t \cdot x}$ is linear. 
The study of toric vector bundles goes back to Kaneyama \cite{Kaneyama} and Klyachko \cite{Klyachko}, who both gave classifications of toric vector bundles in terms of combinatorial and linear algebra data. Klyachko applied this to study, among other things, splitting of low rank vector bundles on $\p^n$. We here recollect Klyachko's description:

To a toric vector bundle $\E$ of rank $r$ on $X_\Sigma$, we let $E \simeq k^r$ denote the fiber at the identity of the torus. Klyachko shows that there  for each ray $\rho \in \Sigma(1)$ is an associated filtration $E^\rho (j)$ of $E$, indexed over $j \in \Z$. It has the property that for any ray $E^\rho(j) = 0$ for $j$ sufficiently large and $E^\rho(j) = E$ for $j$ sufficiently small. Additionally it satisfies a compatibility condition:

For any maximal cone $\sigma$ there exists characters $u_1,...,u_r \in M$ and vectors $L_u \in E$ such that for any ray $\rho$ of $\sigma$ we have
\[ E^\rho(j)  = \sum_{j | \lb u,\rho \rb \geq j} L_u.\]

The above decomposition is equivalent to the fact that on the affine $U_\sigma$, $\E$ splits into a direct sum of line bundles $\OO(u_i)$. Klyachko's classification theorem is the following:

\begin{theorem} [{\cite[Thm 0.1.1]{Klyachko}}]
The category of toric vector bundles on $X$ is equivalent to the category of finite dimensional vector spaces $E$, with filtrations indexed by the rays as described above, satisfying the compatibility condition. A morphism $\E \to \F$ corresponds to a linear map $E \to F$, respecting the filtrations.
\end{theorem}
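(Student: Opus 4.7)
The plan is to establish an equivalence of categories by constructing functors in both directions and checking they are mutually inverse, with morphisms handled by the same underlying linear data.

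In the forward direction, given a toric vector bundle $\E$ on $X_\Sigma$, set $E := \E_1$, the fiber over the identity of the torus. To extract the filtration $E^\rho(j)$, I would fix any maximal cone $\sigma$ with $\rho \in \sigma(1)$ and use the standard fact that a $T$-equivariant vector bundle on an affine toric variety $U_\sigma$ corresponds to an $M$-graded finitely generated projective module over $k[\sigma^\vee \cap M]$. Since $\sigma$ is full-dimensional and strongly convex, this graded module is free, so $\E|_{U_\sigma}$ decomposes equivariantly as $\bigoplus_{i=1}^{r} \mathcal{O}_{U_\sigma}(u_i)$ for some characters $u_i \in M$. Evaluating at the identity yields a weight decomposition $E = \bigoplus_i L_{u_i}$, and I would define
\[ E^\rho(j) := \sum_{\langle u_i,\rho\rangle \geq j} L_{u_i}. \]

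The first real task is to show this filtration is intrinsic, i.e., independent of the choice of maximal cone $\sigma$ containing $\rho$ and of the choice of splitting when it is not unique. I would do this by giving a coordinate-free description: $E^\rho(j)$ is the subspace of $v \in E$ whose $T$-invariant extension across $D_\rho$ (available after restricting to a neighborhood of the generic point of $D_\rho$, where $\E$ is equivariantly isomorphic to its pullback along the quotient $U_\rho \to \operatorname{Spec} k[\rho^\perp \cap M]$) vanishes to order at least $j$. Both candidate splittings produce the same subspace, and the compatibility condition for the rays of $\sigma$ then follows immediately from the existence of the joint decomposition $E = \bigoplus L_{u_i}$.

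In the backward direction, given $E$ together with compatible filtrations, on each maximal cone $\sigma$ the compatibility hypothesis supplies characters $u_1,\ldots,u_r$ and a weight decomposition $E = \bigoplus_i L_{u_i}$ from which I would build $\E_\sigma := \bigoplus_i \mathcal{O}_{U_\sigma}(u_i)$. The critical step is gluing: for two maximal cones $\sigma,\tau$ meeting in a common face $\sigma \cap \tau$, I must construct a canonical equivariant isomorphism $\E_\sigma|_{U_{\sigma \cap \tau}} \simeq \E_\tau|_{U_{\sigma \cap \tau}}$ and verify the cocycle condition on triple overlaps. The key observation is that over $U_{\sigma \cap \tau}$ the bundle is determined only by the filtrations along the rays of $\sigma \cap \tau$ together with the relative decomposition these dictate; since both $\sigma$- and $\tau$-decompositions refine the same data on the common face, they produce the same equivariant module over $k[(\sigma \cap \tau)^\vee \cap M]$, yielding the desired identification, and the cocycle condition reduces to the uniqueness of this common refinement.

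Finally, I would verify that the two constructions are mutually inverse, which is essentially local and reduces on each affine chart to the equivalence between $M$-graded free modules and weighted vector spaces. For morphisms, an equivariant bundle map $\E \to \F$ induces a linear map on identity fibers that respects every weight decomposition, hence every filtration; conversely a filtration-preserving map of the linear data produces, chart by chart, an equivariant map of the glued bundles. The main obstacle in this program is the gluing step in the backward direction: ensuring independence of the local choices and verifying the cocycle condition, both of which rely crucially on reading the filtrations off intrinsically along the shared faces.
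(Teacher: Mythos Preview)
The paper does not give a proof of this statement: it is quoted as Klyachko's classification theorem \cite[Thm~0.1.1]{Klyachko} and used as background, with no argument supplied beyond the surrounding description of the filtrations and the compatibility condition. There is therefore nothing in the paper to compare your proposal against. For what it is worth, your sketch follows the standard line of Klyachko's original proof---extracting filtrations from local equivariant splittings, checking they are intrinsic via the order-of-vanishing interpretation along $D_\rho$, and gluing in the reverse direction using the compatibility condition---and the spot you flag (well-definedness of the gluing and the cocycle condition) is indeed where the real work lies.
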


Thus, families of vector bundles give rise to families of filtrations of a fixed vector space.

\section{Moduli of toric vector bundles}

In this section we recall Payne's construction of the moduli space of toric vector bundles of fixed equivariant Chern class.

The equivariant Chow ring $A_\ast^T(X_\Sigma)$ of a toric variety $X_\Sigma$ is isomorphic to the ring of integral piecewise polynomial functions, which are polynomial on each cone $\sigma$ \cite[Theorem 1]{PayneEquivariant}. This generalizes the piecewise linear support function associated to a divisor, described above. 

For any cone $\tau \in \Sigma$ we let $M_\tau= M/\tau^\perp \cap M$. If $\sigma$ is a maximal cone containing $\tau$, let $u_1, \ldots ,u_r$ be the characters such that $\E|_{U_\sigma} = \oplus_{i=1}^r \OO(\ddiv(u_i))$. Then the images of $u_1,\ldots,u_r$ in $M(\tau)$ define characters $u_1^\tau,\ldots,u_r^\tau$ in $M_\tau$ which are independent of which maximal cone containing $\tau$ one chooses. Denote by $u(\tau)$ the multiset of characters on the cone $\tau$. Payne showed that the equivariant Chern class $c_i^T(\E)$ of $\E$ is given on $\tau$ by the polynomial $e_i(u(\tau))$, where $e_i$ is the $i$-th elementary symmetric function \cite[Theorem 3]{PayneEquivariant}.

\begin{definition}
A framed rank $r$ toric vector bundle is a toric vector bundle $\E$ together with an isomorphism $\phi:E \to k^r$, where $E$ is the fiber of $\E$ over the identity of the torus.
\end{definition}
Payne proved the following:
\begin{theorem} [{\cite[Theorem 3.9]{PayneModuli}}]
Given a toric variety $X_\Sigma$ and a rank $r$ total equivariant Chern class $c$ there exists a fine moduli scheme $M_c$ of framed rank r toric vector bundles on $X_\Sigma$ with total equivariant Chern class $c$.
\end{theorem}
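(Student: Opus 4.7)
The plan is to realize $M_c$ as a closed subscheme of a product of flag varieties and then construct a universal family via Klyachko's equivalence. First, observe that $c$ determines, on each maximal cone $\sigma$, the multiset $u(\sigma) \subset M$: Payne's formula records the elementary symmetric polynomials of $u_1^\sigma,\ldots,u_r^\sigma$, and a multiset of lattice points is recovered from its elementary symmetric polynomials. The compatibility of $u_i^\tau$ across faces then pins down, for every ray $\rho \in \Sigma(1)$, a partial flag type on $E = k^r$ with prescribed jump indices $\{\langle u,\rho\rangle : u \in u(\sigma)\}$ and jump dimensions $\dim E^\rho(j) = \#\{u \in u(\sigma) : \langle u,\rho\rangle \geq j\}$, independent of the maximal $\sigma \supset \rho$ chosen.

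After fixing a framing $\phi : E \to k^r$, Klyachko's theorem identifies framed toric vector bundles of Chern class $c$ with collections of filtrations $\{E^\rho(\cdot)\}_\rho$ of the prescribed types satisfying the compatibility condition on each maximal cone. I would take as ambient parameter space the product of flag varieties $P = \prod_{\rho \in \Sigma(1)} \mathrm{Fl}_\rho(k^r)$, and cut out $M_c \subset P$ by imposing, for every maximal $\sigma$ and integer thresholds $(j_\rho)_{\rho \in \sigma(1)}$, the intersection dimension condition
\[ \dim \bigcap_{\rho \in \sigma(1)} E^\rho(j_\rho) = \#\{u \in u(\sigma) : \langle u,\rho\rangle \geq j_\rho \text{ for all } \rho \in \sigma(1)\}. \]
Equivalently, the filtrations along the rays of $\sigma$ must arise from a common direct-sum decomposition $E = \bigoplus_{u \in u(\sigma)} k L_u$. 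With the flag types fixed, these intersection dimensions are upper semi-continuous, so $M_c$ is a closed subscheme of $P$.

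To see $M_c$ is \emph{fine} I would build a universal family directly from the tautological filtrations. On each chart $U_\sigma \times M_c$ the universal compatible filtration data yields a direct-sum decomposition of $E \otimes \OO_{M_c}$ into rank-one pieces indexed by $u \in u(\sigma)$, and twisting the $u$-th piece by $\OO(\ddiv(u))$ and gluing along overlaps (automatic from compatibility) produces a toric vector bundle $\E^{\mathrm{univ}}$ on $X_\Sigma \times M_c$ with a canonical framing. Conversely, any family of framed toric vector bundles on $X_\Sigma \times S$ induces a morphism $S \to P$ via the $T$-weight decomposition over each $U_\sigma \times S$, and compatibility forces this morphism to factor through $M_c$ and pull $\E^{\mathrm{univ}}$ back to the given family.

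The main obstacle is the last step: showing that Klyachko's equivalence extends functorially to families. Concretely, one must verify that in a flat family of toric vector bundles the ray filtrations form flat subbundles whose formation commutes with arbitrary base change, and conversely that the explicit reverse gluing yields a flat family. The framing is essential here for killing automorphisms of $\E$, so that $M_c$ represents the moduli functor as a fine, rather than merely coarse, moduli space.
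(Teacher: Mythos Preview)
Your overall strategy matches the paper's sketch (which simply cites Payne for the full proof): parametrize Klyachko data by a product of partial flag varieties and cut out $M_c$ by intersection-dimension constraints coming from the compatibility condition. However, there is a genuine error in your argument.

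You assert that the intersection conditions
\[
\dim \bigcap_{\rho \in \sigma(1)} E^\rho(j_\rho) \;=\; \#\{u \in u(\sigma) : \langle u,\rho\rangle \geq j_\rho \text{ for all } \rho \in \sigma(1)\}
\]
are closed because intersection dimension is upper semi-continuous, and conclude that $M_c$ is a \emph{closed} subscheme of $P$. Upper semi-continuity only gives that the locus where the intersection has dimension \emph{at least} the prescribed number is closed. The condition you wrote is an \emph{equality}, so you also need the intersection to have dimension \emph{at most} the prescribed number, and this is an open condition. Concretely, suppose $\sigma$ has two rays and both filtrations have a one-dimensional step; if $u(\sigma)$ forces the count at that bi-index to be zero, then compatibility demands that the two lines be distinct, which is open. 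The paper says this explicitly: the rank conditions correspond to the vanishing of certain Pl\"ucker polynomials \emph{as well as the non-vanishing of certain others}, so $M_c$ is only \emph{locally closed} in $\Fl_c$.

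A second, smaller gap: in building the universal family you claim that over $U_\sigma \times M_c$ the tautological filtrations yield a direct-sum decomposition of $E \otimes \OO_{M_c}$ into rank-one pieces indexed by $u \in u(\sigma)$. When $u(\sigma)$ has repeated characters the common splitting is not unique, so there is no canonical choice of the rank-one summands and hence no reason for a global decomposition over $M_c$ to exist. Payne's actual construction of the universal bundle handles this more carefully; your sketch would need to as well.
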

We note that the scheme $M_c$ is defined over $\Spec \Z$ \cite[p. 1207]{PayneModuli}. The scheme $M_c$ is constructed as a locally closed subscheme of a product of flag varieties, as follows:
Let $\E$ be a framed rank $r$ toric vector bundle of equivariant Chern class $c$. Fixing the Chern class $c$ is equivalent to fixing compatible multisets $u(\sigma)$ of linear functions on each $\sigma$. Recall we have fixed the isomorphism $\phi: E \simeq k^r$. For any ray $\rho$, consider the dimensions of the subspaces appearing in the filtration $E^\rho(j)$. Let $\Fl(\rho)$ be the flag variety of subspaces of $k^r$ having exactly these dimensions. Let $\Fl_c = \prod_\rho \Fl(\rho)$. We see that $\E$ gives an element of $\Fl_c$ by taking the the subspaces appearing in the filtrations for any ray.

Any bundle  with Chern class $c$ must have the same dimensions of subspaces appearing in the filtrations as $\E$ \cite[p. 1205]{PayneModuli}. Moreover the Chern class $c$ restricts how the subspaces  can meet each other in the following way. If $\sigma= \Cone(\rho_1,\ldots,\rho_s)$  we require the equality
\[  \dim \cap_{i=1}^s E^{\rho_i}(j_i) = \# \{ u \in u(\sigma) | \langle u,\rho_i \rangle \leq j_i \text{ for }i=1,\ldots,s\}. \]
These rank conditions correspond to the vanishing of certain polynomials in the Pl\"ucker coordinates of the partial flag varieties $\Fl_\rho$ as well as the non-vanishing of certain others. Thus the resulting scheme is locally closed in  $\Fl_c$ and is in fact the scheme $M_c$.

\section{Murphy's law on smooth projective toric varieties}

Payne has showed that for moduli of bundles of rank $2$, the moduli space $M_c$ is a smooth quasi-projective variety \cite[p.1209]{PayneModuli}. We will show that this fails remarkably for higher rank vector bundles:  For bundles of higher rank the moduli space satisfies Murphy's law, meaning that any singularity type, defined over $\Z$, appears somewhere on $M_c$. The starting observation is that if the bundle has rank $3$ then all the rank conditions correspond to incidences between points and lines in $\p^2$. If $x_1,\ldots,x_d$ are points  and $l_1,\ldots,l_{d'}$ are lines in $\p^2$, then from a subset
\[I \subset \{1,\ldots,d \} \times \{ 1,\ldots,d' \}\]
we can define a set of incidences of points and lines stating that $x_i \in l_j$ if and only if $(i,j) \in I$. There is an associated incidence scheme 
\[ C_I \subset \prod_{i=1}^d \p^2 \times \prod_{i=1}^{d'} {\p^2}^\vee,\]
parametrizing all such sets of points and lines. Payne uses the scheme $C_I$ to prove Murphy's law for moduli of rank three toric vector bundles on quasi-affine and quasi-projective toric varieties. The proof is via, given a set of incidences $I$ between points and lines in $\p^2$, constructing a quasi-affine toric variety such that $M_c$ is $PGL_3$-equivariantly isomorphic to $C_I$ and applying Mn\"{e}v's universality theorem \cite{Mnev}, which states that any singularity appears on some such incidence scheme.  This is essentially the same idea used by  Vakil to prove the original formulation of Murphy's law for other moduli spaces \cite{Vakil}. The way Payne proves this for toric vector bundles is by putting all the points and all the lines as non-trivial subspaces of a filtration $E^\rho(l)$ and then choosing the fan such that all pairs of rays are maximal cones. By construction we get the incidence scheme $C_I$, which has the desired properties.

The material in this section was inspired by the following question:

\begin{question} [{\cite[Remark 4.4]{PayneModuli}}]
Does the moduli of toric vector bundles on projective toric varieties satisfy Murphy's law?
\end{question}
Using techniques similar to the above, we can now prove Murphy's law for moduli of rank at least three toric vector bundles on smooth projective toric varieties.

\begin{theorem}
Given an incidence $I$ between points and lines in $\p^2$ there exists a smooth projective toric variety and a rank three Chern class $c$ on it, such that $M_c$ is $PGL_3$-equivariantly isomorphic to $C_I$. Thus Murphy's law holds for moduli spaces of rank three framed toric vector bundles on smooth projective toric varieties. Also Murphy's law holds for the coarse moduli scheme of rank three toric vector bundles on smooth projective varieties.
\end{theorem}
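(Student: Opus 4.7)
The plan is to adapt Payne's quasi-affine construction to a smooth projective setting. Recall that given an incidence $I \subset \{1,\ldots,d\}\times \{1,\ldots,d'\}$, Payne assigns a ray $\rho_i$ to each point $x_i$ carrying a filtration whose only nontrivial subspace is the $1$-dimensional subspace of $E \simeq k^3$ corresponding to $x_i$, and a ray $\rho'_j$ to each line $l_j$ with a nontrivial $2$-dimensional subspace. He then chooses the fan whose only maximal cones are the $2$-dimensional cones $\Cone(\rho_i,\rho'_j)$ with $(i,j)\in I$. The rank conditions of the previous section precisely express the incidences in $I$, giving $M_c \simeq C_I$ equivariantly for $PGL_3$. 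Our task is to replace this fan by a smooth projective fan without changing $M_c$.

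First I would construct the ambient fan. Concretely, I would look for a smooth projective fan $\Sigma$ in some $N=\Z^n$ whose ray set contains disjoint subsets $\{\rho_i\}_{i=1}^d$ and $\{\rho'_j\}_{j=1}^{d'}$ together with a supply of auxiliary rays, and with the property that two rays from these distinguished subsets lie in a common maximal cone of $\Sigma$ if and only if the corresponding pair lies in $I$, and no maximal cone contains two point rays or two line rays. A convenient approach is to start from a standard smooth projective toric variety in a sufficiently large dimension and perform iterated star subdivisions (i.e.\ blow-ups along $T$-invariant subvarieties) to create the required rays in prescribed combinatorial positions; star subdivisions preserve both smoothness and projectivity, and the auxiliary rays they introduce can be used to separate undesired pairs.

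Next I would equip each ray with a filtration: Payne's point and line filtrations on the distinguished rays, and the trivial filtration on each auxiliary ray. The multisets $u(\sigma)$ then determine an equivariant Chern class $c$. By construction, for any maximal cone $\sigma$ the only nontrivial rank conditions appearing in the Pl\"ucker description of $M_c$ come from pairs $(\rho_i,\rho'_j)\subset \sigma$, since the trivial filtrations on auxiliary rays render all other conditions automatic. Therefore $M_c$ cuts out in the product of flag varieties exactly the scheme $C_I$, $PGL_3$-equivariantly. Murphy's law for framed bundles then follows from Mn\"ev's universality theorem applied to $C_I$. For the coarse moduli one descends through the $PGL_3$-quotient, which is smooth at every point with trivial $PGL_3$-stabilizer; since singularity types are invariants of smooth morphisms in the sense of Vakil, this transfers Murphy's law to the coarse moduli scheme.

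The main obstacle is the combinatorial construction of $\Sigma$. In a complete fan many pairs of rays are joined by chains of cones, and the interaction between smoothness (every maximal cone generated by a basis of $N$) and projectivity (existence of a strictly convex support function) restricts how one may glue the local pieces. The delicate point is to insert enough auxiliary rays to ensure that no undesired pair of distinguished rays ends up in a common maximal cone, while still being able to perform the subdivisions that realise the desired incidences. The saving grace is that only the local structure near the distinguished rays enters the moduli computation, so one may focus on controlling the star of each $\rho_i$ and $\rho'_j$ and leave the remainder of the fan to be completed freely.
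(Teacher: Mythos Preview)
There is a genuine gap: you have misread how the incidence $I$ is encoded. In Payne's quasi-affine construction (as the paper summarises it) the fan has \emph{all} pairs of rays as maximal cones, not only the pairs coming from $I$; the distinction between $(i,j)\in I$ and $(i,j)\notin I$ is carried by the equivariant Chern class, i.e.\ by the multiset $u(\sigma_{ij})$ attached to the $2$-cone $\sigma_{ij}=\Cone(\rho_i,\rho'_j)$, which is chosen differently in the two cases. The resulting rank condition on $\sigma_{ij}$ reads $\dim(x_i\cap l_j)=1$ (a closed condition) when $(i,j)\in I$ and $\dim(x_i\cap l_j)=0$ (an open condition) when $(i,j)\notin I$. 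Both are needed, since $C_I$ is defined by $x_i\in l_j$ \emph{if and only if} $(i,j)\in I$ and is therefore locally closed, not closed. In your proposal pairs outside $I$ never share a cone, so no rank condition links $x_i$ and $l_j$ at all; your $M_c$ would then contain configurations with $x_i\in l_j$ for $(i,j)\notin I$ and would be strictly larger than $C_I$. The same remark applies to the point--point and line--line pairs you exclude: in the paper these also span $2$-cones carrying Chern-class data.

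This also means the delicate combinatorial problem you set yourself is unnecessary. The paper's construction is uniform and explicit: set $n=d+d'-1$, blow up $\p^n$ successively along all $T$-invariant subvarieties of codimension at least three, and take the original $n+1$ rays $\rho_1,\dots,\rho_{n+1}$ as the distinguished rays. After these blow-ups every pair $\rho_i,\rho_j$ still spans a $2$-cone of $\Sigma$, but no triple $\rho_i,\rho_j,\rho_k$ spans a cone. One then puts the point/line filtration on each $\rho_i$, the trivial filtration on every exceptional ray, and chooses $u(\sigma)$ on each maximal cone according to four cases (two points, two lines, point on line, point not on line). The incidence $I$ lives entirely in the Chern class, not in the fan, and one obtains $M_c=C_I$ without any bespoke fan-building.
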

\begin{proof}
Set $n=d+d'-1$. Let $X_n$ be the toric variety obtained by blowing up $\p^n$ along the following linear spaces of increasing dimension: first all torus-invariant points, then all strict transforms of torus-invariant lines and so on until we have blown up all invariant subvarieties of codimension at least three. We call the new fan $\Sigma$. Blowing up a toric variety corresponds to inserting a new ray in the relative interior of the cone corresponding to the subvariety we blow up. The original $n+1=d+d'$ rays $\rho_1,\ldots,\rho_{n+1}$ of $\p^n$ are thus still rays of $\Sigma$.  Set 
\[y_i=x_i, i=1,\ldots,d \]
\[y_{d+i} = l_{i}, i=1,\ldots,d'.\]
We now specify the equivariant Chern class. In other words we describe the multiset $u(\sigma)$ for each $\sigma \in \Sigma$. For simplicity we first assume that $\sigma = \Cone(\rho_1,\rho_2,\rho_1+\rho_2+\rho_3,\rho_1+\rho_2+\rho_3+\rho_4,\ldots,\rho_1+\ldots+\rho_n)$. Similar to Payne  \cite[Top of p.1211]{PayneModuli} we now define $u(\sigma)$ according to the four cases.
\[ u(\sigma) = \begin{cases} 
\{ 0,e_1^\ast-e_3^\ast,e_2^\ast-e_3^\ast \} \text{ if }y_1,y_2 \text{ are both points} \\
\{ 0,e_2^\ast-e_3^\ast,e_1^\ast+e_2^\ast-2e_3^\ast \} \text{ if }y_1 \text{ is a point containing the line }y_2 \\
\{ e_1^\ast-e_3^\ast,e_2^\ast-e_3^\ast,e_2^\ast-e_3^\ast \} \text{ if } y_1\text{ is a point not contained in the line } y_2  \\
\{ e_1^\ast-e_3^\ast,e_2^\ast-e_3^\ast,e_1^\ast+e_2^\ast-2e_3^\ast \} \text{ if }y_1,y_2 \text{ are both lines} \\
\end{cases} \]
Every maximal cone $\sigma'$ is of the same form as $\sigma$, up to permutation of the $\rho_i$, thus for other maximal cones the definition of $u(\sigma')$ is done in the analogous way.

The above might seem mysterious, however if we fix a bundle $\E$ with equivariant Chern class as above, the point is that  this forces the filtrations on $\rho_i$ to be of the form
\[ E^i(j) = \begin{cases} 
k^3 &\text{ if }  j \leq 0  \\
y_i &\text{ if }  j = 1 \\
0 &\text{ if } 1 < j
\end{cases} \]
and the filtration on any other ray to be trivial, in other words to jump directly from $0$ to $k^3$ at step $0$ of the filtration. It is straight-forward to check that Klyachko's compatibility condition is satisfied for these filtrations: the characters on $U_\sigma$ is exactly the characters $u(\sigma)$. Thus the above filtrations correspond to a toric vector bundle whose Chern classes are the elementary symmetric functions of $u(\sigma)$. Thus $u(\sigma)$ correspond to a well-defined equivariant Chern class. All pairs $\rho_i,\rho_j$ form a two-dimensional cone $\sigma_{ij}$, thus from the Chern class on this cone we get all incidences from $I$. Moreover because we have blown up so much, no three $\rho_i,\rho_j,\rho_k$ form a cone, thus we do not get any extra incidences. Thus $M_c= C_I$ and we are done.

The statement on the coarse moduli scheme follows from the fact that it is the quotient of $M_c$ by $GL_3$ \cite[Corollary 3.11]{PayneModuli}. By Mn\"{e}v's universality theorem \cite[Section 1.8]{MnevThm} we have that for any affine scheme $Y$ defined over $\Spec \Z$, there exists some incidence scheme $C_I$ on which $PGL_3$ acts freely such that the quotient $C_I/GL_3$ is isomorphic to an open subvariety of $Y \times \A^s$ projecting surjectively to $Y$, for some $s$. Thus such quotients satisfy Murphy's law which implies that Murphy's law is satisfied for the coarse moduli scheme of rank three toric vector bundles. This argument is the same as Payne's argument in \cite[Theorem 4.2]{PayneModuli}.
\end{proof}

\begin{remark}
The toric varieties $X_n$ from the proof appear in the literature: If one blows up $X_n$ also along the strict transforms of all codimension two linear spaces, the resulting variety is the Losev--Manin space $LM_n$. By \cite[Remark 1.5]{CastravetTevelev}  the blowup at a general point of $X_{n+1}$ is a small modification of a $\p^1$-bundle over $\overline{M}_{0,n}$. Using this, Castravet and Tevelev prove that the blowup at a general point of $X_n$ is not a Mori Dream Space, for large $n$.
\end{remark}

{\bf Acknowledgements.}
I am grateful to John Christian Ottem for  helpful conversations on the topic in this paper, as well as comments on earlier versions of this paper. I also wish to thank Milena Hering, Nathan Ilten and Ragni Piene for comments on an earlier version of this paper.

\bibliography{ref}

\bibliographystyle{alpha}
\end{document}